\documentclass[12pt,twoside, draft]{article}
\usepackage{ifthen}
\usepackage{amsmath}
\usepackage{amsfonts}
\usepackage{latexsym}
\usepackage{amsthm}
\usepackage{amssymb}
\usepackage{color}

\setcounter{page}{1}

\setlength{\textheight}{21.6cm}

\setlength{\textwidth}{14cm}

\setlength{\oddsidemargin}{1cm}

\setlength{\evensidemargin}{1cm}

\pagestyle{myheadings}

\thispagestyle{empty}

\markboth{\small{T. M. Salo, O. B. Skaskiv}}{\small{The minimum
modulus of gap power series}}

\date{}

\begin{document}


\centerline{}

\centerline{}

\centerline {\Large{\bf The minimum  modulus of gap power series}}
\centerline {\Large{\bf and h-measure of exceptional sets}}

\centerline{}

\centerline{\bf {T. M. Salo}}

\centerline{}

\centerline{Institute of Applied Mathematics and Fundamental Sciences,}

\centerline{National University "Lvivs'ka Polytehnika", Ukraine}

\centerline{tetyan.salo@gmail.com}

\centerline{}

\centerline{\bf {O. B. Skaskiv}}

\centerline{}

\centerline{Department of Mechanics and Mathematics,}

\centerline{Ivan Franko National University of L'viv, Ukraine}

\centerline{olskask@gmail.com}

\centerline{}

\newtheorem{theorem}{\quad Theorem}[section]

\newtheorem{Definition}[theorem]{\quad Definition}

\newtheorem{Corollary}{\quad Corollary}[section]

\newtheorem{Lemma}{\quad Lemma}[section]

\newtheorem{Example}[theorem]{\quad Example}

\newtheorem{Conjecture}{\quad Conjecture}[section]

\newtheorem{Remark}{\quad Remark}[section]

\begin{abstract}
For entire function of the form
$f(z)=\sum_{k=0}^{+\infty}f_kz^{n_k}$, where $(n_k)$ is a strictly
increasing sequence of non-negative integers, we establish
conditions when the relations
$$
M_f(r)=(1+o(1)) m_f(r),\quad M_f(r)=(1+o(1))\mu_f(r)
$$
is true as $r\to+\infty$ outside some set $E$ such that  $\text{\rm
h-meas }(E)=$ \break $\int_{E}\frac{dh(r)}{r}<+\infty$ uniformly in
$y\in\mathbb{R}$, where $h(r)$ is positive continuous function
increasing to $+\infty$ on $[0,+\infty)$ with non-decreasing
derivative, and $M_f(r)=\max\{|f(z)|\colon |z|=r\},\
m_f(r)=\min\{|f(z)|\colon |z|=r\},\
\mu_f(r)=\max\{|f_k|r^{n_k}\colon k\geq 0\} $ the maximum modulus,
the minimum modulus and the maximum term of $f$ respectively.
\end{abstract}

{\bf Subject Classification:} 30B50 \\

{\bf Keywords:}  gap power series, minimum modulus, maximum modulus,
maximal term, entire Dirichlet series, exceptional set

\section{Introduction} Let $L$ be the class of positive continuous functions increasing to $+\infty$ on $[0;+\infty)$.
By $L^+$ we denote the subclass of $L$ which  consists of the
differentiable functions with non-decreasing derivative, and $L^-$
the  subclass of functions with non-increasing derivative.

Let $f$ be an entire function of the form
\begin{equation}\label{gap}
f(z)=\sum_{k=0}^{+\infty}f_kz^{n_k},
\end{equation}
 where $(n_k)$ is a strictly increasing
sequence of non-negative integers. For $r>0$ we denote by $
M_f(r)=\max\{|f(z)|\colon |z|=r\},\ m_f(r)=\min\{|f(z)|\colon
|z|=r\},\ \mu_f(r)=\max\{|f_k|r^{n_k}\colon k\geq 0\} $ the maximum
modulus, the minimum mo\-du\-lus and the maximum term of $f$
respectively.

P.C. Fenton \cite{Fen} (see also \cite{Erd}) has proved the
following statement.
\begin{theorem}[\cite{Fen}]\label{tFen} If
\begin{equation}\label{Erd}
\sum_{k=0}^{+\infty}\frac1{n_{k+1}-n_k}<+\infty,
\end{equation}
then for every entire function $f$ of the form \eqref{gap} there
exists a set $E\subset [1,+\infty)$ of finite logarithmic measure,
i.e. $\text{\rm log-meas }E:=\int_E d\log r<+\infty$, such that
relations
\begin{equation}\label{asymp}
M_f(r)=(1+o(1)) m_f(r),\quad M_f(r)=(1+o(1))\mu_f(r)
\end{equation}
hold as $r\to +\infty$\ $(r\notin E)$.
\end{theorem}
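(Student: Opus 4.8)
The plan is to establish the asymptotic relations by controlling the behavior of $f$ on circles $|z| = r$ outside an exceptional set, using the gap condition \eqref{Erd} to show that the maximum term dominates the series. The key observation is that for a gap power series, the central index (the value of $k$ attaining the maximum term $\mu_f(r)$) grows slowly enough, and the gaps $n_{k+1} - n_k$ are large enough in an averaged sense, that away from a small set of radii the function $f(z)$ is well-approximated by the single dominant term $f_{\nu(r)} z^{n_{\nu(r)}}$, where $\nu(r)$ denotes the central index. Once we know that a single term dominates uniformly on the whole circle $|z|=r$, both relations in \eqref{asymp} follow at once, since $M_f(r) \approx |f_{\nu}| r^{n_\nu} = \mu_f(r)$ gives the second relation, and the dominance forces $|f(z)|$ to be nearly constant in modulus on the circle, giving $m_f(r) \approx M_f(r)$ and hence the first.

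First I would split the series at the central index and estimate the "tail" contributions. Writing $f(z) = f_\nu z^{n_\nu}\bigl(1 + \sum_{k\neq\nu} (f_k/f_\nu) z^{n_k - n_\nu}\bigr)$, the task reduces to showing $\sum_{k\neq\nu} |f_k| r^{n_k}/\mu_f(r) = o(1)$ as $r\to+\infty$ outside $E$. By definition of $\mu_f$, each summand is at most $1$, so the delicate point is to gain decay. The standard device is to compare neighboring terms: the ratio $|f_k| r^{n_k}/|f_{k+1}| r^{n_{k+1}}$ is controlled by how far $r$ is from the transition radius at which term $k+1$ overtakes term $k$, and the gap $n_{k+1}-n_k$ governs the rate. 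The convergence of $\sum 1/(n_{k+1}-n_k)$ is precisely what lets one sum these contributions while excising only a set of radii of finite logarithmic measure where the tail fails to be small.

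The main obstacle, as I see it, is the uniformity in the argument $\arg z$: the bound $m_f(r) = (1+o(1))M_f(r)$ requires that no cancellation on the circle drives the minimum modulus far below the maximum, and this must hold simultaneously for all points on $|z|=r$. I would handle this by noting that once the tail sum $\sum_{k\neq\nu}|f_k|r^{n_k} = o(\mu_f(r))$ is established (a bound independent of $\arg z$), the factor $1 + \sum_{k\neq\nu}(f_k/f_\nu)z^{n_k-n_\nu}$ has modulus between $1-o(1)$ and $1+o(1)$ uniformly on the circle, which immediately yields both the maximum and minimum estimates. Thus the analytic heart is the tail estimate, and the argument-uniformity is essentially free once that is in hand.

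Finally, I would assemble the exceptional set $E$ as the union of the radius-intervals discarded in the tail estimate, verifying that $\text{log-meas } E = \int_E d\log r < +\infty$ follows from \eqref{Erd} by a term-by-term accounting: each gap contributes an excluded interval of logarithmic length comparable to $1/(n_{k+1}-n_k)$, and summing these gives finiteness. This reduction of the global exceptional set to a convergent series indexed by the gaps is the structural reason the hypothesis \eqref{Erd} is the right one.
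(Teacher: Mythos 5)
Your proposal has the right skeleton, and it is in fact the same skeleton as the argument this paper actually contains: note that the paper does not prove Theorem~\ref{tFen} at all (it is quoted from Fenton's article); the closest in-paper argument is the proof of Theorem~\ref{the1}, which for $h(x)\equiv x$ and under the substitution $F(w)=f(e^{w})$, $\lambda_k=n_k$ (so that logarithmic measure in $r$ becomes Lebesgue measure in $x=\log r$) yields exactly this statement. Your reduction is correct and matches that proof: once one shows $\sum_{k\neq\nu}|f_k|r^{n_k}=o(\mu_f(r))$ outside $E$, both relations in \eqref{asymp} follow uniformly in $\arg z$, and the exceptional set should indeed be assembled from intervals of logarithmic length comparable to $1/(n_{k+1}-n_k)$.

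The genuine gap is in the tail estimate, which you correctly call the analytic heart but then only gesture at. Comparing \emph{neighboring} terms near their pairwise transition radii cannot suffice: excising a neighborhood of the radius where term $k+1$ overtakes term $k$ controls only that one ratio, while every term far from the central index is still bounded merely by $\mu_f(r)$, so the sum over $k\neq\nu$ is not controlled at all; what is needed is decay in $|k-\nu|$ valid \emph{simultaneously} for all $k$ on one and the same excluded set. A telescoping chain of neighbor comparisons does not produce this in general, because the pairwise transition radii need not be monotone in $k$ (most terms never become maximal), and repairing that via the Newton majorant destroys the term-by-term correspondence with the gaps $n_{k+1}-n_k$ on which your measure count relies. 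The paper's proof of Theorem~\ref{the1} supplies exactly the missing device: perturb the coefficients by $\alpha_n=e^{q\Delta_n}$, with $\Delta_n$ built from the convergent series \eqref{2}, and shift the central-index intervals $[R_k,R_{k+1})$ of the \emph{perturbed} series by $\tau_k$; Lemma 1 then gives the geometric bound $|f_n|r^{n_n}\le e^{-q|n-\nu|}\mu_f(r)$ for every $n$, at the price of excluding transition zones of total measure at most $\sum_k 2q/(n_{k+1}-n_k)<+\infty$, as in \eqref{osnovna}. Moreover, even this yields only a tail bounded by $\frac{2e^{-q}}{1-e^{-q}}\mu_f(r)$, a fixed small constant rather than $o(\mu_f(r))$; the paper's final step takes $q_k=k\to\infty$ and glues the sets $E_1(q_k)\cap[x_k,x_{k+1})$ along a rapidly increasing sequence $(x_k)$ so that the union still has finite measure. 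Both ideas --- the coefficient perturbation producing uniform geometric decay, and the diagonalization in $q$ --- are absent from your plan; without them (or Fenton's own substitutes) the assertion that \eqref{Erd} ``lets one sum these contributions'' restates the goal rather than proving it.
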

P. Erd\H{o}s and A.J. Macintyre \cite{Erd} proved that conditions
\eqref{Erd} implies that \eqref{asymp} holds as $r=r_j\to +\infty$
for some sequence $(r_j)$.

 Denote by $D(\Lambda)$ the class of entire (absolutely
convergent in the complex plane) Dirichlet series of form
\begin{equation}\label{1}
F(z) = \sum\limits_{n=0}^{+\infty} a_{n}e^{z\lambda_{n}},
\end{equation}
where $\Lambda=(\lambda_n)$ is a fixed sequence such that
$0=\lambda_0<\lambda_n\uparrow+\infty$ $(1\leq n\uparrow+\infty).$

Let us introduce some  notations for $F\in D(\Lambda)$ and $x\in\mathbb{R}$:
$\mu(x,F)=\max\{|a_n|e^{x\lambda_n}\colon n\ge 0\}$ is the maximal term,
$M(x,F)=\sup\{|F(x+iy)|\colon y\in\mathbb{R}\}$ is the maximum modulus,
$m(x,F)=\inf\{|F(x+iy)|\colon y\in\mathbb{R}\}$ is the minimum modulus,
$\nu(x,F)=\max\{n\colon|a_n|e^{x\lambda_n}=\mu(x,F)\}$ is the central index of series~(\ref{1}).

 In \cite{1.} (see also \cite{srivastava_1958}) we find the following theorem.

\medskip\noindent{\bf Theorem A (O.B. Skaskiv, 1984).} {\sl  For every entire function $F\in D(\Lambda)$ relation
\begin{equation}\label{3}
F(x+iy)=(1+o(1))a_{\nu(x, F)}e^{(x+iy)\lambda_{\nu(x, F)}}
\end{equation}
holds as $x\to +\infty$\ outside some set
$E$ of finite Lebesgue measure ($\int_{E}d x<+\infty$)  uniformly  in $y\in\mathbb{R}$, if and only if
\begin{equation}\label{2}
\sum\limits_{n=0}^{+\infty}\frac1{\lambda_{n+1}-\lambda_{n}}<+\infty.
\end{equation}
}

Note, in the paper \cite{SkaSher} was proved the  analogues of other
assertions from the article of Fenton  \cite{Fen} for the subclasses
of functions $F\in D(\Lambda)$ defined by various restrictions on
the growth rate of the maximal term $\mu(x,F)$.

The finiteness of Lebesgue measure of an exceptional set $E$ in
theorem A is the best possible description. It follows from the such
statement.

\medskip\noindent{\bf Theorem  B (T.M. Salo, O.B. Skaskiv, 2001 \cite{Sal_Tr_Sk}).} {\sl For every sequence  $\lambda=(\lambda_k)$\
(including those  which satisfy $(\ref{2})$) and for every  positive
con\-ti\-nuous differentiable function $h\colon [0,+\infty)\to
[0,+\infty)$\ such that $h'(x)\nearrow +\infty$\ $(x\to +\infty)$\
there exist an entire Dirichlet series $F\in D(\lambda),$\ a
constant $\beta >0$\ and  a measurable set $E_1\subset [0,+\infty)$\
of infinite  $h-$measure\ ($\text{\rm h-meas }
(E_1)\overset{def}=\int_{E_1}dh(x)=+\infty$) such that
\begin{equation}\label{ner}
(\forall\ x\in E_1)\colon\ M(x,F)>(1+\beta)\mu(x,F),\ \ M(x,F)>(1+\beta)m(x,F).
\end{equation}}

Recently, Ya.V. Mykytyuk showed us, that in Theorem В it is enough
to require that the positive non-decreasing function $h$\ be such
that

\smallskip\centerline{${h(x)}/x\to +\infty$\ $(x\to
+\infty).$}

From Theorem  B follows that the finiteness of logarithmic measure
of an exceptional set $E$ in Fenton's Theorem \ref{tFen} also is the
best possible description.

It is easy to see that the relation
\begin{equation*}
F(x+iy)=(1+o(1))a_{\nu(x, F)}e^{(x+iy)\lambda_{\nu(x, F)}}
\end{equation*}
holds as $x\to +\infty$\ $(x\notin E)$\  uniformly  in $y\in\mathbb{R}$, if and only if
\begin{equation}\label{osn}
M(x,F)\sim \mu(x,F)\quad\text{ and }\quad M(x,F)\sim m(x,F)\quad (x\to +\infty,\ x\notin E).
\end{equation}

Due to Theorem B the natural question arises: {\it what conditions
must satisfy the entire Dirichlet series that relation (\ref{3}) is
true as $x\to +\infty$\ outside some set $E_2$ of finite
$h$-measure,  i.e.

\smallskip\centerline{ $\text{\rm h-meas } (E_2)<+\infty$?}}\

\noindent\smallskip In this paper we obtain the answer to this
question when $h\in L^{+}$.

\section{$h-$measure  with non-decreasing density}
 According to Theorem B, in case $h\in L^{+}$ condition \eqref{2} must be fulfilled. Therefore, in  subclass
\[D(\Lambda,\Phi)=\{F\in D(\Lambda):\ \ln\mu(x, F)\geq x \Phi(x)\ (x>x_0)\},\quad \Phi\in L,\]
it should be strengthened.  The following theorem indicates this.

\begin{theorem}\label{the1} {\sl Let $\Phi\in L$, $h\in L^{+}$ and $\varphi$ be the inverse function to the function $\Phi$. If
\begin{equation}\label{8}
   (\forall b>0):\ \ \sum\limits_{k=0}^{+\infty}\dfrac1{\lambda_{k+1}-\lambda_{k}}
{h^{\prime}\Big(\varphi(\lambda_k)+\frac b{\lambda_{k+1}-\lambda_{k}}}\Big)<+\infty,
\end{equation}
then for all $F\in D(\Lambda, \Phi)$ holds (\ref{3}) is true as
 $x\to +\infty$\ outside some set
$E$ of finite  $h$-measure uniformly  in
$y\in\mathbb{R}$.}\end{theorem}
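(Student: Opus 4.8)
The plan is to reduce the assertion (\ref{3}), uniform in $y$, to a one–variable estimate for the maximal term and then to a measure bound for the exceptional set. Write $b_n(x)=|a_n|e^{x\lambda_n}$ and $\nu=\nu(x,F)$, and factor out the central term: $F(x+iy)=a_\nu e^{(x+iy)\lambda_\nu}\bigl(1+R(x,y)\bigr)$. Since $|e^{(x+iy)(\lambda_n-\lambda_\nu)}|=e^{x(\lambda_n-\lambda_\nu)}$, one has $|R(x,y)|\le S(x):=\sum_{n\ne\nu}b_n(x)/\mu(x,F)$ for \emph{every} $y$. Hence (\ref{3}) holds uniformly in $y$ as soon as $S(x)\to0$ off $E$ (and conversely it forces $S(x)\to0$). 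Thus it suffices to produce a set $E$ with $\text{h-meas}(E)<+\infty$ such that $S(x)\to0$ as $x\to+\infty$, $x\notin E$; uniformity in $y$ is then automatic.

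Next I would record two structural facts. First, the growth hypothesis localizes the jump points of the central index. Put $\sigma(x):=\ln\mu(x,F)$, a convex piecewise–linear function with left derivative $\lambda_{\nu(x)}$, so that $\sigma(x)=\sigma(0)+\int_0^x\lambda_{\nu(t)}\,dt\le x\lambda_{\nu(x)}+O(1)$. Combined with $\sigma(x)\ge x\Phi(x)$ this gives $\lambda_{\nu(x)}\ge\Phi(x)(1+o(1))$, and applying the increasing inverse $\varphi$ yields $x\le\varphi(\lambda_{\nu(x)})$ for large $x$ (the lower–order correction being absorbed below). In particular, if $x_{k+1}$ denotes the abscissa where the central index passes from $k$ to $k+1$, then $x_{k+1}\le\varphi(\lambda_k)$. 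Second, convexity controls $S(x)$: on $\{\nu(x)=k\}$ one has $b_{k+1}(x)/\mu(x,F)=e^{(\lambda_{k+1}-\lambda_k)(x-x_{k+1})}$, on $\{\nu(x)=k+1\}$ the lagging neighbour satisfies $b_k(x)/\mu(x,F)=e^{-(\lambda_{k+1}-\lambda_k)(x-x_{k+1})}$, and all remaining terms are majorised geometrically by these nearest–neighbour ratios.

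Fix $\varepsilon\in(0,1)$. From the convexity bound, $S(x)\ge\varepsilon$ can occur only in a neighbourhood of a jump point: there is a constant $b=b(\varepsilon)$ (of size $\ln(1/\varepsilon)$ up to the geometric tail) so that $\{x:S(x)\ge\varepsilon\}\subset\bigcup_k I_k$, where each $I_k\subset\bigl[x_{k+1}-\tfrac{b}{\lambda_{k+1}-\lambda_k},\,x_{k+1}+\tfrac{b}{\lambda_{k+1}-\lambda_k}\bigr]$ has length at most $\tfrac{2b}{\lambda_{k+1}-\lambda_k}$. By the localization of the jump points the right endpoint of $I_k$ is at most $\varphi(\lambda_k)+\tfrac{b}{\lambda_{k+1}-\lambda_k}$, so, $h'$ being non-decreasing,
\[
\text{h-meas}(I_k)=\int_{I_k}h'(x)\,dx\le\frac{2b}{\lambda_{k+1}-\lambda_k}\,h'\!\Bigl(\varphi(\lambda_k)+\frac{b}{\lambda_{k+1}-\lambda_k}\Bigr).
\]
Summing over $k$ and invoking hypothesis (\ref{8}) with this $b$ gives $\text{h-meas}\{x:S(x)\ge\varepsilon\}<+\infty$ for each fixed $\varepsilon$.

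Finally I would apply this to $\varepsilon_m=1/m$, obtaining sets $E_m$ of finite $h$-measure; choosing $c_m\uparrow+\infty$ with $\sum_m\text{h-meas}\bigl(E_m\cap[c_m,+\infty)\bigr)<+\infty$ and setting $E=\bigcup_m\bigl(E_m\cap[c_m,+\infty)\bigr)$ produces a single set of finite $h$-measure outside which $S(x)\to0$, which finishes the proof. The main obstacle is the passage from the pointwise picture to the clean containment $\{S\ge\varepsilon\}\subset\bigcup_k I_k$ while keeping the tail of the series under control, together with the verification that the right endpoint of $I_k$ is exactly of the form $\varphi(\lambda_k)+\tfrac{b}{\lambda_{k+1}-\lambda_k}$ demanded by (\ref{8}); it is precisely here that membership in $D(\Lambda,\Phi)$ is used, through the inequality $x_{k+1}\le\varphi(\lambda_k)$.
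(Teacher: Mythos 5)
Your outer scaffolding coincides with the paper's own: the reduction of the uniform-in-$y$ statement to the tail ratio $S(x)=\sum_{n\neq\nu}|a_n|e^{x\lambda_n}/\mu(x,F)$, the use of membership in $D(\Lambda,\Phi)$ to get $x\le\varphi(\lambda_{\nu(x-0,F)})$ and hence to place the $k$-th exceptional interval to the left of $\varphi(\lambda_k)+b/(\lambda_{k+1}-\lambda_k)$ (this is the paper's step \eqref{6.5}--\eqref{meas}), the Lagrange-theorem estimate of the $h$-measure using $h\in L^+$, and the final diagonalization over $\varepsilon_m=1/m$ (the paper diagonalizes over $q_k=k$). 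But the core step of your argument --- the containment $\{x:S(x)\ge\varepsilon\}\subset\bigcup_k I_k$ with $|I_k|\le 2b(\varepsilon)/(\lambda_{k+1}-\lambda_k)$ for a \emph{fixed} $b(\varepsilon)$ --- is false in general, and it is exactly the hard part of the theorem. Convexity gives only the per-term bound $|a_n|e^{x\lambda_n}/\mu(x,F)\le e^{-(\lambda_n-\lambda_k)\delta}$ when $\nu(x)=k$ and $x$ lies at distance $\delta$ to the left of the jump point; these bounds are not ``majorised geometrically by the nearest-neighbour ratio''. Concretely, suppose a large gap $\lambda_{k+1}-\lambda_k=G$ is followed by $N$ much smaller gaps of size $g$ with $Ng\le G$ (blocks with $N_i=i$, $g_i=i^3$, $G_i=i^4 2^i$ are compatible with \eqref{2}, hence with \eqref{8} for $h(x)=x\in L^+$, and a suitable choice of coefficients and of $\Phi$ keeps $F$ in $D(\Lambda,\Phi)$ with well-separated jump points). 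At $x=x_{k+1}-2b/G$, each of the $N$ terms $n=k+1,\dots,k+N$ has $\lambda_n-\lambda_k\le 2G$ and so contributes at least $e^{-4b}$, whence $S(x)\ge N e^{-4b}\ge\varepsilon$ for $N$ large, although $x$ lies outside every $I_m$. Thus no fixed $b(\varepsilon)$ works: the width needed near such a jump point grows like $\ln N/G$, with $N$ depending on the clustering of future gaps, and for a fixed $b$ there is no region at all on which your bound holds, so the concluding diagonalization cannot even start.

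This obstruction is precisely what the paper's machinery is built to remove, and it is where the convergence of \eqref{2} (implied by \eqref{8}) enters essentially, rather than as a side remark. The paper introduces the auxiliary series $f_q(z)=\sum_n(a_n/\alpha_n)e^{z\lambda_n}$ with weights $\alpha_n=e^{q\Delta_n}$, where $\Delta_n$ is assembled from partial sums of the reciprocal gaps, together with the shifts $\tau_k=qx_k+q/(\lambda_k-\lambda_{k-1})$. Its Lemma (inequality \eqref{5}) yields the genuinely geometric bound $(\alpha_n/\alpha_k)e^{\tau_k(\lambda_n-\lambda_k)}\le e^{-q|n-k|}$, so that on the shifted intervals $[R_k+\tau_k,R_{k+1}+\tau_k)$ --- where $(R_k)$ are the jump points of the central index of $f_q$, not of $F$ --- one gets $|a_n|e^{x\lambda_n}/\mu(x,F)\le e^{-q|n-k|}$ and hence the tail bound \eqref{osnovna} with sum $2e^{-q}/(1-e^{-q})$, \emph{independently} of the gap configuration. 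The effect of the weights is to push consecutive jump points apart by the tails of the series \eqref{2}, which is exactly what defeats the ``many small gaps after a big one'' configuration; the exceptional intervals then again have length $2q/(\lambda_{k+1}-\lambda_k)$, but they sit at the repositioned points $R_{k+1}+\tau_k$, and the estimate \eqref{meas} still applies. In short, what you flagged at the end as ``the main obstacle'' is not a technicality to be smoothed over: it is the content of the theorem, and the proposal as written does not supply it.
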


\begin{proof}[Proof of Theorem \ref{the1}] Note first that condition \eqref{8} implies the convergence of series \eqref{2}.
Denote $\Delta_0=0$ and for $n\geq1 $
\[\Delta_n=\sum_{j=0}^{n-1}\left(\lambda_{j+1}-\lambda_j\right)
\sum_{m=j+1}^{\infty}\left(\frac{1}{\lambda_{m}-\lambda_{m-1}}+\frac{1}{\lambda_{m+1}-\lambda_m}\right).\]

Consider the function
\[f_q(z)=\sum_{n=0}^{+\infty}\dfrac{a_n}{\alpha_n}\ e^{z\lambda_n},  \]
where $\alpha_n=e^{q\Delta_n},\ q>0.$

Since $\Delta_n\geq 0$, then $f_q\in D(\Lambda)$  and $\nu(x,
f_q)\to +\infty$  $(x\to+\infty)$.

Repeating the proof of Lemma 1 from \cite{2.}, it is not difficult to obtain the following lemma.
\begin{Lemma} {\sl For all $n\geq0$ and $k\geq1$ inequality
\begin{equation}\label{5}
   \frac{\alpha_n}{\alpha_k}\ e^{\tau_k(\lambda_n-\lambda_k)}\leq e^{-q|n-k|},
\end{equation}
is true, where
$\tau_k=\tau_k(q)=qx_k+\dfrac{q}{\lambda_k-\lambda_{k-1}},\
x_k=\dfrac{\Delta_{k-1}-\Delta_{k}}{\lambda_k-\lambda_{k-1}}$.}
\end{Lemma}

\begin{proof}[Proof of Lemma 1] Since \[\ln\alpha_n-\ln\alpha_{n-1}=q(\Delta_n-\Delta_{n-1})=-qx_n(\lambda_n-\lambda_{n-1}),\] then for $n\geq k+1$  we have \[\ln\dfrac{\alpha_n}{\alpha_k}+\tau_k(\lambda_n-\lambda_k)=
-q\sum\limits_{j=k+1}^{n}x_j(\lambda_j-\lambda_{j-1})+\tau_k\sum\limits_{j=k+1}^{n}(\lambda_j-\lambda_{j-1})=\]
\[=-\sum\limits_{j=k+1}^{n}\left(qx_j-\tau_k\right)\left(\lambda_j-\lambda_{j-1}\right)\leq
-\sum\limits_{j=k+1}^{n}\left(qx_j-\tau_{j-1}\right)\left(\lambda_j-\lambda_{j-1}\right)=\]
\[=-q\sum\limits_{j=k+1}^{n}1=-q(n-k).\]
 Similarly, for  $n\leq k-1$ we obtain
\[
\ln\dfrac{\alpha_n}{\alpha_k}+\tau_k(\lambda_n-\lambda_k)=-\ln\dfrac{\alpha_k}{\alpha_n}-\tau_k(\lambda_k-\lambda_n)=
\]
\[
=q\sum\limits_{j=n+1}^{k}x_j(\lambda_j-\lambda_{j-1})-\tau_k\sum\limits_{j=n+1}^{k}(\lambda_j-\lambda_{j-1})=
-\sum\limits_{j=n+1}^{k}\left(\tau_k-qx_j\right)\left(\lambda_j-\lambda_{j-1}\right)\leq
\]
\[
\leq-\sum\limits_{j=n+1}^{k}\left(\tau_{j}-qx_j\right)\left(\lambda_j-\lambda_{j-1}\right)=-q\sum\limits_{j=n+1}^{k}1=-q(k-n)
\]
and Lemma 1 is proved.
\end{proof}

 Let $J$ be the range of central index  $\nu(x, f_q)$. Denote by $(R_k)$ the sequence of the jump points of central
 index, numbered in such a way that  $\nu(x, f_q)=k$ for all
 $x\in[R_k,R_{k+1})$ and $R_k<R_{k+1}$. Then for all $x\in[R_k,R_{k+1})$ and $n\geq 0$ we have
 \[\dfrac{a_n}{\alpha_n}e^{x\lambda_n}\leq\dfrac{a_k}{\alpha_k}e^{x\lambda_k}.\]
 According to Lemma 1, for  $x\in[R_k+\tau_k,R_{k+1}+\tau_k)$ we obtain
\[\dfrac{a_ne^{x\lambda_n}}{a_ke^{x\lambda_k}}\leq\dfrac{\alpha_n}{\alpha_k}\ e^{\tau_k(\lambda_n-\lambda_k)}\leq e^{-q|n-k|}\ \ \ \ (n\geq0).\]
Therefore,
\begin{equation}\label{nu}
\nu(x,F)=k,\quad \mu(x, F)=a_ke^{x\lambda_k}\quad (x\in[R_k+\tau_k,R_{k+1}+\tau_k))
\end{equation}
and
\begin{gather}
    |F(x+iy)-a_{\nu(x,F)}e^{(x+iy)\lambda_{\nu(x,F)}}|\leq\nonumber\\
    \leq \sum_{n\neq\nu(x,F)}\mu(x,F)e^{-q|n-\nu(x,F)|}\leq
    2\ \frac{e^{-q}}{1-e^{-q}}\mu(x,F) \label{osnovna}
\end{gather}
for all $x\in[R_k+\tau_k,R_{k+1}+\tau_k)$ and $k\in J$. Thus, inequality \eqref{osnovna} holds
for all $x\notin E_1(q)\overset{def}=\bigcup\limits_{k=0}^{+\infty}[R_{k+1}+\tau_k,R_{k+1}+\tau_{k+1})$.

Since $\tau_{k+1}-\tau_{k}={2q}/(\lambda_{k+1}-\lambda_{k}),$ and by
the Lagrange theorem

\smallskip\centerline{$h(R_{k+1}+\tau_{k+1})-h(R_{k+1}+\tau_{k})=
(\tau_{k+1}-\tau_{k})h^{\prime}(R_{k+1}+\tau_k+\theta_k(\tau_{k+1}-\tau_{k})),$
}

\noindent\smallskip where $\theta_k\in(0;1),$ then for every $q>0$
we have
\begin{gather}\text{\rm h-meas } (E_1(q))=\sum\limits_{k=0}^{+\infty}\int_{R_{k+1}+\tau_k}^{R_{k+1}+\tau_{k+1}}dh(x)=
\nonumber\\=\sum\limits_{k=0}^{+\infty}(h(R_{k+1}+\tau_{k+1})-h(R_{k+1}+\tau_{k}))\leq
\nonumber\\
\leq2q\sum\limits_{k=0}^{+\infty}\dfrac1{\lambda_{k+1}-\lambda_{k}}
{h^{\prime}\Big(R_{k+1}+\tau_k+2q\frac1{\lambda_{k+1}-\lambda_{k}}}\Big).\label{6}
\end{gather}
Here we applied the condition $h\in L^{+}.$

 For $F\in D(\Lambda, \Phi)$ as $x >\max\{x_0, 1\}$ we have
\[x\Phi(x)\leq\ln\mu(x,F)=\ln\mu(1, F)+\int\limits_{1}^{x}\lambda_{\nu(x, f)}dx\leq\ln\mu(1, F)+(x-1)\lambda_{\nu(x-0,F)},\]
and for all $x\geq x_1\geq x_0$ it implies
\begin{equation}\label{6.5}
x\Phi(x)\leq x\lambda_{\nu(x-0,F)},
\end{equation}
i.e.
\[x\leq\varphi\left(\lambda_{\nu(x-0,F)}\right) \quad (x\geq x_1).\]
Thus, according to \eqref{nu} for  $k\geq k_0$ we obtain
 \[R_{k+1}+\tau_{k}\leq\varphi\left(\lambda_{\nu(R_{k+1}+\tau_{k}-0,F)}\right)=\varphi(\lambda_k). \]
Applying the previous inequality to inequality  \eqref{6}, by the
condition $h\in L^{+}$ we have
\begin{equation}\label{meas}
\text{\rm h-meas }(E_1(q))\leq2q\sum\limits_{k=0}^{+\infty}\dfrac1{\lambda_{k+1}-\lambda_{k}}
{h^{\prime}\Big(\varphi(\lambda_k)+2q\frac1{\lambda_{k+1}-\lambda_{k}}}\Big).
\end{equation}

Therefore, using \eqref{8} we conclude that $\text{\rm h-meas }(E_1(q))<+\infty.$

Let $q_k=k.$ Since $\text{\rm h-meas}(E_1(q_k))<+\infty,$ then
$$
\text{\rm h-meas}(E_1(q_k)\cap[x,+\infty))=o(1)\quad (x\to
+\infty),
$$
 thus it is possible to choose an increasing to  $+\infty$ sequence $(x_k)$ such that
$$
\text{\rm h-meas }\big(E_1(q_k)\cap[x_k;+\infty)\big)\leq\dfrac{1}{k^2}
$$
for all $k\geq1$. Denote
$
E_1=\bigcup\limits_{k=1}^{+\infty}\big(E_1(q_k)\cap[x_k;x_{k+1})\big).
$
Then
\[\text{\rm h-meas}\left(E_1\right)=\sum\limits_{k=1}^{+\infty}\text{\rm h-meas }\left(E_1(q_k)\cap[x_k;x_{k+1})\right)
\leq\sum\limits_{k=1}^{+\infty}\dfrac{1}{k^2}<+\infty,\]
On the other hand from inequality \eqref{osnovna} we deduce
 for $x\in[x_k;x_{k+1})\setminus E_1$
\[
    |F(x+iy)-a_{\nu(x,F)}e^{(x+iy)\lambda_{\nu(x,F)}}|\leq 2\ \frac{e^{-q_k}}{1-e^{-q_k}}\mu(x,F),
\]
whence,  as $x\to+\infty$ $(x\notin E_1)$ we obtain (\ref{3}).
Theorem \ref{the1} is proved.
\end{proof}

\vskip5pt Note, if $h(x)\equiv x$ then condition (\ref{8}) turn into
condition (\ref{2}), and $h$ - measure of the set $E$ is it's
Lebesgue measure.

Let $\Phi\in L$. Consider the classes
\[D_{0}(\Lambda,\Phi)=\{F\in D(\Lambda):\ (\exists K>0)[\ \ln\mu(x, \Phi)\geq Kx \Phi(x)\ (x>x_0)]\},\]
\[D_1(\Lambda,\Phi)=\{F\in D(\Lambda):\ (\exists K_1, K_2>0)[\ \ln\mu(x, \Phi)\geq K_1x \Phi(K_2x)\ (x>x_0)]\}.\]

\begin{theorem}\label{the2}{\sl Let $\Phi_0\in L$, $h\in L^{+}$ and $\varphi_0$ be the inverse function to the function $\Phi_0$. If
\begin{equation}\label{11}
   (\forall b>0): \ \sum_{n=0}^{+\infty}\frac{1}{\lambda_{n+1}-\lambda_{n}}h^{\prime}\left(\varphi_0(b\lambda_n)+\dfrac{b}{\lambda_{n+1}-\lambda_{n}}\right)<+\infty,
\end{equation}
then for each function $F\in D_{0}(\Lambda, \Phi_0)$ relation
(\ref{3}) holds as
 $x\to +\infty$\ outside some set
$E$ of finite  $h$ - measure uniformly in
$y\in\mathbb{R}$.}\end{theorem}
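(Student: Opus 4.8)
The plan is to deduce Theorem \ref{the2} from Theorem \ref{the1} by absorbing the unknown constant $K$ into the minorant $\Phi$. Fix $F\in D_0(\Lambda,\Phi_0)$ and choose $K>0$ so that $\ln\mu(x,F)\geq Kx\Phi_0(x)$ for $x>x_0$. Setting $\Phi(x):=K\Phi_0(x)$, we have $\Phi\in L$ (it is positive, continuous and increases to $+\infty$, since $\Phi_0$ does and $K>0$), and $\ln\mu(x,F)\geq x\Phi(x)$, so that $F\in D(\Lambda,\Phi)$. The inverse function of $\Phi=K\Phi_0$ is $\varphi(t)=\varphi_0(t/K)$, because $K\Phi_0(\varphi_0(t/K))=K\cdot(t/K)=t$.

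The key step is to check that hypothesis \eqref{11} forces hypothesis \eqref{8} of Theorem \ref{the1} for this particular $\Phi$. Written out for $\Phi=K\Phi_0$, condition \eqref{8} reads
\begin{equation*}
(\forall b>0):\quad \sum_{k=0}^{+\infty}\frac{1}{\lambda_{k+1}-\lambda_k}\,h'\left(\varphi_0\left(\frac{\lambda_k}{K}\right)+\frac{b}{\lambda_{k+1}-\lambda_k}\right)<+\infty.
\end{equation*}
Given $b>0$, put $b'=\max\{b,1/K\}$. Since $\varphi_0$ is increasing we have $\varphi_0(\lambda_k/K)\leq\varphi_0(b'\lambda_k)$ for every $k$ (with equality at $k=0$, where $\lambda_0=0$), and $b\leq b'$; because $h\in L^{+}$ its derivative $h'$ is non-decreasing, so each summand above is dominated by $\frac{1}{\lambda_{k+1}-\lambda_k}h'\left(\varphi_0(b'\lambda_k)+\frac{b'}{\lambda_{k+1}-\lambda_k}\right)$. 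Summing and invoking \eqref{11} with parameter $b'$ gives finiteness, so \eqref{8} holds for $\Phi=K\Phi_0$.

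With \eqref{8} verified, Theorem \ref{the1} applies to $F\in D(\Lambda,\Phi)$ and yields relation \eqref{3} as $x\to+\infty$ outside a set of finite $h$-measure, uniformly in $y\in\mathbb{R}$; since $F$ was an arbitrary element of $D_0(\Lambda,\Phi_0)$, this proves Theorem \ref{the2}. I expect the only real subtlety to be the bookkeeping with the two ``for all $b$'' quantifiers: the constant $K$ depends on $F$, but the universal quantifier over $b$ in \eqref{11} is precisely what lets one choose $b'=\max\{b,1/K\}$ and absorb $K$, so that the single hypothesis \eqref{11} covers every admissible $K$ simultaneously. No new estimate beyond the monotonicity of $h'$ and $\varphi_0$ is needed.
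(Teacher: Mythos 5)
Your proposal is correct and takes essentially the same route as the paper: the authors likewise set $\Phi(x)=K\Phi_0(x)$, note that $\varphi(x)=\varphi_0(x/K)$, and deduce condition \eqref{8} from \eqref{11} before applying Theorem \ref{the1}. Your write-up merely makes explicit the detail the paper leaves implicit, namely the choice $b'=\max\{b,1/K\}$ together with the monotonicity of $h'$ and $\varphi_0$, which is exactly the right justification.
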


\begin{theorem}\label{the3}{\sl Let $\Phi_1\in L$, $h\in L^{+}$, and $\varphi_1$ be the inverse function to the function $\Phi_1$. If
\begin{equation}\label{12}
   (\forall b>0): \ \sum_{n=0}^{+\infty}\frac{h^{\prime}(b\varphi_1(b\lambda_n))}{\lambda_{n+1}-\lambda_{n}}<+\infty,
\end{equation}
then for every function $F\in D_1(\Lambda, \Phi_1)$ relation
(\ref{3}) holds as
 $x\to +\infty$\ outside some set
$E$ of finite $h$-measure uniformly in
$y\in\mathbb{R}$.}\end{theorem}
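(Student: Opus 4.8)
The plan is to follow the proof of Theorem \ref{the1} almost verbatim, changing only the growth estimate that controls the argument of $h'$. The auxiliary function $f_q(z)=\sum_n \tfrac{a_n}{\alpha_n}e^{z\lambda_n}$ with $\alpha_n=e^{q\Delta_n}$, Lemma 1, and the consequences \eqref{nu}, \eqref{osnovna} together with the h-measure bound
\begin{equation*}
\text{\rm h-meas }(E_1(q))\leq 2q\sum_{k=0}^{+\infty}\frac{1}{\lambda_{k+1}-\lambda_k}\,h'\Bigl(R_{k+1}+\tau_k+\frac{2q}{\lambda_{k+1}-\lambda_k}\Bigr)
\end{equation*}
never use the defining inequality of the growth class, so they carry over unchanged. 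One first checks, exactly as in Theorem \ref{the1}, that \eqref{12} forces the convergence of \eqref{2}: since $h'$ is non-decreasing and eventually bounded below by a positive constant while $b\varphi_1(b\lambda_n)\to+\infty$, the tail of \eqref{12} dominates a constant multiple of the tail of \eqref{2}; this guarantees that each $\Delta_n$ is well defined and $f_q\in D(\Lambda)$.

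Next I would insert the growth hypothesis of $D_1(\Lambda,\Phi_1)$. Fix $F\in D_1(\Lambda,\Phi_1)$ and the associated constants $K_1,K_2>0$ with $\ln\mu(x,F)\geq K_1 x\,\Phi_1(K_2 x)$ for $x>x_0$. Running the same chain of inequalities as for \eqref{6.5},
\begin{equation*}
K_1 x\,\Phi_1(K_2 x)\leq \ln\mu(x,F)\leq \ln\mu(1,F)+(x-1)\lambda_{\nu(x-0,F)}\leq x\,\lambda_{\nu(x-0,F)}
\end{equation*}
for all sufficiently large $x$, whence $\Phi_1(K_2 x)\leq \lambda_{\nu(x-0,F)}/K_1$ and, applying the increasing inverse $\varphi_1$, $x\leq \tfrac{1}{K_2}\varphi_1\bigl(\lambda_{\nu(x-0,F)}/K_1\bigr)$. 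Evaluating this at $x=R_{k+1}+\tau_k$, where by \eqref{nu} the central index equals $k$, yields the key bound $R_{k+1}+\tau_k\leq \tfrac{1}{K_2}\varphi_1(\lambda_k/K_1)$ for $k\geq k_0$.

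The hard part will be the final absorption step: I must bound the entire argument $R_{k+1}+\tau_k+\tfrac{2q}{\lambda_{k+1}-\lambda_k}$ of $h'$ by an expression of the form $b\varphi_1(b\lambda_k)$ matching \eqref{12}. Choosing $b\geq \max\{1/K_1,1/K_2\}+1$ and using the monotonicity of $\varphi_1$ gives $\tfrac{1}{K_2}\varphi_1(\lambda_k/K_1)\leq b\,\varphi_1(b\lambda_k)$; since $\lambda_{k+1}-\lambda_k\to+\infty$, the residual term $\tfrac{2q}{\lambda_{k+1}-\lambda_k}$ is at most $1\leq \varphi_1(b\lambda_k)$ for large $k$, so that $R_{k+1}+\tau_k+\tfrac{2q}{\lambda_{k+1}-\lambda_k}\leq b\,\varphi_1(b\lambda_k)$ eventually. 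The subtlety is that this $b$ depends on the function-specific constants $K_1,K_2$ and on $q$; this is precisely why \eqref{12} is quantified over all $b>0$. By the monotonicity of $h'$ (here $h\in L^+$ is used once more) the h-measure bound becomes $\text{\rm h-meas }(E_1(q))\leq 2q\sum_k \tfrac{1}{\lambda_{k+1}-\lambda_k}h'(b\varphi_1(b\lambda_k))<+\infty$ by \eqref{12}.

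Finally I would repeat the diagonal construction of Theorem \ref{the1} without change: take $q_k=k$, pick an increasing sequence $x_k\to+\infty$ with $\text{\rm h-meas }(E_1(q_k)\cap[x_k,+\infty))\leq k^{-2}$, and set $E_1=\bigcup_k(E_1(q_k)\cap[x_k,x_{k+1}))$, which has finite h-measure. On $[x_k,x_{k+1})\setminus E_1$ inequality \eqref{osnovna} holds with constant $2e^{-q_k}/(1-e^{-q_k})\to0$, which yields \eqref{3} as $x\to+\infty$ outside $E_1$, uniformly in $y\in\mathbb{R}$. Apart from the absorption step of the third paragraph, everything is a faithful transcription of the proof of Theorem \ref{the1}.
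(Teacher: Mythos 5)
Your proposal is correct and is in substance the paper's own argument: the paper proves Theorem~\ref{the3} in two lines by observing that $F\in D_1(\Lambda,\Phi_1)$ belongs to $D(\Lambda,\Phi)$ with $\Phi(x)=K_1\Phi_1(K_2x)$, hence $\varphi(x)=\varphi_1(x/K_1)/K_2$, noting that condition \eqref{12} implies condition \eqref{8} for this $\varphi$ --- which is precisely your ``absorption step'' --- and then invoking Theorem~\ref{the1} as a black box. You simply inline that reduction by re-running the proof of Theorem~\ref{the1} with the $D_1$ growth hypothesis inserted, so the two proofs coincide step for step; your version just makes explicit the monotonicity verification that the paper leaves implicit.
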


\begin{proof}[Proof of Theorems 2 and 3] Theorems 2 and 3 immediately follow from Theo\-rem~\ref{the1}.

 Indeed, if  $F\in D_0(\Lambda, \Phi_0)$ then $F\in D(\Lambda, \Phi)$ as $\Phi(x)=K\Phi_0(x)$. But in this case
 $\varphi(x)=\varphi_0(x/K)$ and  thus condition \eqref{8} follows from condition \eqref{11}. It remains to apply Theorem ~\ref{the1}.

Similarly, if $F\in D_1(\Lambda, \Phi_1)$ then $F\in D(\Lambda,
\Phi)$ as $\Phi(x)=K_1\Phi_1(K_2x)$. But in this case
$\varphi(x)=\varphi_1(x/K_1)/K_2$ and thus condition \eqref{8}
follows from condition  \eqref{12}. It remains to apply
Theorem~\ref{the1} again.
\end{proof}

\begin{Remark}\sl
It is easy to see, that for every fixed functions $h\in L^{+}$ and
$\Phi\in L$ there exists a sequence $\Lambda$ such that  conditions
\eqref{8}, \eqref{11} and \eqref{12} hold.
\end{Remark}

The following theorem indicates that condition \eqref{12} is
necessary for relations \eqref{3}, \eqref{osn} to hold for every
$F\in D_1(\Lambda,\Phi_1)$ as  $x\to +\infty$ outside a set of
finite $h$ - measure. Here we assume that condition \eqref{2} is
satisfied.

\begin{theorem}\label{the4}{\sl Let $\Phi_1\in L$, $h\in L^+$, and $\varphi_1$ is the inverse function to the function $\Phi_1$.
For every sequence  $\Lambda$  such that
\begin{equation}\label{13}(\exists b>0) : \ \ \sum_{n=0}^{+\infty}\frac{h'(b\varphi_1(b\lambda_n))}{\lambda_{n+1}-\lambda_{n}}=+\infty,\end{equation}
 there exist a function $F\in D_1(\Lambda, \Phi_1)$, a set  $E\subset[0,+\infty)$ and a constant  
$\beta>0$ such that inequalities (\ref{ner}) hold for all $x\in E$
and $\text{\rm h-meas }(E)=+\infty$.}\end{theorem}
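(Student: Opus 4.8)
The plan is to reverse-engineer a series $F\in D_1(\Lambda,\Phi_1)$ whose log-Newton diagram has vertices at prescribed abscissas, placed so that the jump points of the central index fall exactly where condition \eqref{13} forces the $h$-measure to accumulate. Fix $b>0$ as in \eqref{13} and prescribe the jump points of $\nu(x,F)$ to be $x_n\asymp b\varphi_1(b\lambda_n)$ (the precise constant, and possibly a subsequence, to be adjusted below). Since $\varphi_1$ increases to $+\infty$, the sequence $(x_n)$ is strictly increasing to $+\infty$, so setting $\ln a_n=-\sum_{j<n}x_j(\lambda_{j+1}-\lambda_j)$ makes the points $(\lambda_n,\ln a_n)$ lie on a concave (Newton) majorant with $\nu(x,F)=n$ precisely on $[x_{n-1},x_n)$; the same summation gives $\ln a_n/\lambda_n\to-\infty$, so $F\in D(\Lambda)$ is entire.

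Next I would verify the membership $F\in D_1(\Lambda,\Phi_1)$. By construction $\lambda_{\nu(x,F)}\ge \tfrac1b\Phi_1(x/b)$ for $x$ near $x_n$, and integrating $\ln\mu(x,F)=\ln\mu(x_0,F)+\int_{x_0}^x\lambda_{\nu(t,F)}\,dt$ together with the monotonicity of $\Phi_1$ yields $\ln\mu(x,F)\ge K_1x\Phi_1(K_2x)$ with, e.g., $K_1=K_2=\tfrac1{2b}$. This is precisely the point of defining $D_1$ with two free constants: the class is stable under the linear rescaling produced by the construction, so no sharp tracking of $\Phi_1$ is needed.

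The core of the argument is local, around each $x_n$. On the interval $I_n=[x_n-\delta_n,\,x_n+\delta_n]$ with $\delta_n=c/(\lambda_{n+1}-\lambda_n)$ the two terms $a_ne^{x\lambda_n}$ and $a_{n+1}e^{x\lambda_{n+1}}$ have moduli within a factor $e^{\pm c}$ of one another and both of order $\mu(x,F)$. Since varying $y$ rotates the relative phase $e^{iy(\lambda_{n+1}-\lambda_n)}$ over the whole circle, for each $x\in I_n$ one may choose $y$ producing constructive interference, giving $M(x,F)\ge (1+e^{-c})\mu(x,F)-T(x)$, and another $y$ producing cancellation, giving $m(x,F)\le \bigl|\,|a_n|e^{x\lambda_n}-|a_{n+1}|e^{x\lambda_{n+1}}\bigr|+T(x)\le c\,\mu(x,F)+T(x)$, where $T(x)$ denotes the sum of all remaining terms. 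Taking $c$ and (via $T$) the tail small yields \eqref{ner} with a fixed $\beta$, say $\beta=\tfrac12$, on all of $I_n$. Finally, because $h\in L^{+}$ the Lagrange estimate gives $\text{\rm h-meas}(I_n)\ge c'\,h'(x_n)/(\lambda_{n+1}-\lambda_n)\asymp h'(b\varphi_1(b\lambda_n))/(\lambda_{n+1}-\lambda_n)$, so $E=\bigcup_nI_n$ has infinite $h$-measure by \eqref{13}.

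The main obstacle is controlling the tail $T(x)$ and guaranteeing that the $I_n$ are genuinely separated, i.e. that near $x_n$ only the terms $n$ and $n+1$ matter. Here the standing hypothesis that \eqref{2} holds is essential: it forces $\lambda_{n+1}-\lambda_n\to+\infty$, whence $\delta_n\to0$, while the convexity estimate (the same computation as in Lemma~1) bounds the $m$-th term on $I_n$ by $\exp\{-(x_{n+1}-x_n-\delta_n)(\lambda_m-\lambda_{n+1})\}$ for $m\ge n+2$, and symmetrically for $m\le n-1$. Checking that $(x_{n+1}-x_n)(\lambda_{n+2}-\lambda_{n+1})\to+\infty$, so that $T(x)=o(\mu(x,F))$ uniformly on $I_n$, is the one place where the interplay between the growth of $\varphi_1$ and the gaps $\lambda_{n+1}-\lambda_n$ must be examined; should the raw choice $x_n=b\varphi_1(b\lambda_n)$ fail this for some $\Lambda$, I would thin out to a subsequence of indices, keeping the $h$-measure sum in \eqref{13} divergent, on which the required separation holds, and steepen the coefficients in between.
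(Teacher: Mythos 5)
Your construction is, in its skeleton, the paper's own: jump points of the central index placed at roughly $b\varphi_1(b\lambda_n)$, coefficients $a_n$ obtained by summing jump abscissas against the gaps, membership in $D_1(\Lambda,\Phi_1)$ by integrating $\lambda_{\nu(t,F)}$ (this is where the two free constants $K_1,K_2$ absorb the factor $b$), exceptional set a union of intervals of length $\asymp 1/(\lambda_{n+1}-\lambda_n)$ at the jump points, and the $h$-measure bounded below by the Lagrange theorem plus monotonicity of $h'$. The genuine gap is in how you establish \eqref{ner}. You go through phase interference, which forces you to prove that the tail $T(x)$ of all terms other than the two critical ones is $o(\mu(x,F))$ on $I_n$; you correctly flag this as the main obstacle, but you do not close it. The sufficient condition you name, $(x_{n+1}-x_n)(\lambda_{n+2}-\lambda_{n+1})\to+\infty$, can simply fail: the jump gaps must satisfy $x_{n+1}-x_n\gtrsim 1/(\lambda_{n+2}-\lambda_{n+1})$ anyway (so that $I_{n+1}$ fits inside a constancy interval of $\nu$), and when the increments of $b\varphi_1(b\lambda_\cdot)$ are dominated by these reciprocal gaps the product above stays bounded. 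Your fallback --- thinning to a subsequence ``keeping the sum in \eqref{13} divergent'' --- is unjustified and genuinely problematic: after thinning, the interval at a surviving jump point has length $\asymp 1/(\lambda_{n_{j+1}}-\lambda_{n_j})$ rather than $1/(\lambda_{n_j+1}-\lambda_{n_j})$, so the lower bound on $\text{\rm h-meas}(E)$ is no longer a tail of the divergent series \eqref{13}, and a divergent positive series can easily have all such sparse modified sums finite.

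The idea you are missing makes all tail control unnecessary: your coefficients are already positive (they are exponentials of real numbers), so evaluate at $y=0$. Every term of $F(x)=\sum_k a_ke^{x\lambda_k}$ is positive, hence the tail helps rather than hurts, and for $x\in\big[\varkappa_n,\varkappa_n+\frac1{\lambda_n-\lambda_{n-1}}\big]$ one gets, with no separation requirement whatsoever,
\[
M(x,F)\ge F(x)\ge a_{n-1}e^{x\lambda_{n-1}}+a_ne^{x\lambda_n}\ge(1+e^{-1})\,\mu(x,F),
\]
because $a_{n-1}e^{x\lambda_{n-1}}/\big(a_ne^{x\lambda_n}\big)=\exp\{(\lambda_n-\lambda_{n-1})(\varkappa_n-x)\}\ge e^{-1}$ there; the second inequality in \eqref{ner} then follows by combining this with the relation $m(x,F)\le\mu(x,F)$, so no cancellation at any particular $y$ has to be exhibited. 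This is exactly the paper's proof, and it also explains the paper's choice of increments $r_k=\max\big\{b\varphi_1(b\lambda_{k+1})-b\varphi_1(b\lambda_k),\,1/(\lambda_{k+1}-\lambda_k)\big\}$: the maximum simultaneously guarantees $\varkappa_n\ge b\varphi_1(b\lambda_{n-1})$ (so that, $h'$ being non-decreasing, \eqref{13} forces $\text{\rm h-meas}(E)=+\infty$), that each exceptional interval lies inside $[\varkappa_n,\varkappa_{n+1})$ where $\mu(x,F)=a_ne^{x\lambda_n}$, and --- via the standing assumption \eqref{2} --- the upper bound $\varkappa_n\le 2b\varphi_1(b\lambda_{n-1})$ needed for $F\in D_1(\Lambda,\Phi_1)$. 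These are precisely the three points that your prescription ``$x_n\asymp b\varphi_1(b\lambda_n)$, constants to be adjusted'' leaves unresolved.
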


\begin{proof}[Proof of Theorem \ref{the4}]
Denote  $\varkappa_1=\varkappa_2=1$, $\varkappa_n=\sum\limits_{k=1}^{n-2}r_k\ \ (n\geq3)$, where
\[
r_1=\max\Big\{b\varphi_1(b\lambda_2),\frac{1}{\lambda_2-\lambda_1}\Big\},
\]
\[
r_k=\max\Big\{b\varphi_1(b\lambda_{k+1})-b\varphi_1(b\lambda_{k}),
\frac{1}{\lambda_{k+1}-\lambda_{k}}\Big\}\quad (k\geq 2),
\]
and also choose $a_0=1,$ $a_n
=\exp\Big\{-\sum\limits_{k=1}^{n}\varkappa_k(\lambda_k-\lambda_{k-1})\Big\}\
\ (n\geq1)$. We prove that the function $F$ defined by series
\eqref{1} of the so-defined coefficients $(a_n)$ and indices
$(\lambda_n)$ belongs to class $D_1(\Lambda,\Phi_1)$.

Since
$\sum\limits_{n=0}^{+\infty}\dfrac{1}{\lambda_{n+1}-\lambda_{n}}<+\infty$
implies $n=o(\lambda_n)$ $(n\to +\infty),$ thus $\dfrac{\ln
n}{\lambda n}\to 0\ (n\to+\infty).$ By the construction $\varkappa_n
= \dfrac{\ln a_{n-1}-\ln a_n}{\lambda_n-\lambda_{n-1}} \ (n\geq1)$
and $\varkappa_n\uparrow+\infty\ (n\to+\infty)$, therefore Stolz's
theorem  yields $-\dfrac{\ln a_n}{\lambda_n}\to +\infty \ (n\to
+\infty)$ and by Valiron's theorem \cite[p.85]{Leont}) the abscissa
of absolute convergence of series \eqref{1} is equal to $+\infty$,
i.e. $F\in D(\Lambda)$.

Moreover, it is  known that in case $\varkappa_n\uparrow +\infty$\ $(n\to+\infty)$
\begin{equation}\label{vamu}
\forall x\in
\left[\varkappa_n,\varkappa_{n+1}\right)\colon\quad \mu(x,F)=a_n e^{x\lambda_n},\ \ \ \nu(x,F)=n .\end{equation}
  Since by the construction
  \[\varkappa_n\leq b\varphi_1(b\lambda_{n-1})+\sum\limits_{k=1}^{n-2}\frac{1}{\lambda_{k+1}-\lambda_{k}}\leq 2b\varphi_1(b\lambda_{n-1}) \ \ \ (n>n_0),\]
for sufficiently large $n$ for all  $x\in [\varkappa_n,\varkappa_{n+1})$
\begin{align*}
\ln\mu(2x,F)=\ln\mu(x,F)+\int\limits_{x}^{2x}\lambda_{\nu(t)}dt\geq
x\lambda_{\nu(x)}=\\
=x\lambda_n \geq \dfrac{x}{b}\Phi_1\left(\frac{\varkappa_{n+1}}{2b}\right)\geq \dfrac{x}{b}\Phi_1\left(\frac{x}{2b}\right).
\end{align*}
Hence, for $x\geq x_0$ we have
\[\ln\mu(x,F)\geq \frac{1}{2b}x\Phi_1\Big(\frac{x}{4b}\Big)\]
and thus $F\in D_1(\Lambda,\Phi_1)$.

Note that
$$\varkappa_{n+1}-\varkappa_n
=r_{n-1}\geq \frac{1}{\lambda_n-\lambda_{n-1}} \ \ \ (n\geq1). $$
For $x\in \left[\varkappa_n,
\varkappa_n+\frac{1}{\lambda_n-\lambda_{n-1}}\right]$ we have
\begin{align}
\frac{a_{n-1}e^{x\lambda_{n-1}}}{\mu(x,F)}=\frac{a_{n-1}e^{x\lambda_{n-1}}}{a_n
e^{x\lambda_n}}=\exp\{(\lambda_n-\lambda_{n-1})(\varkappa_n-x)\} \geq  e^{-1}:=\beta, \label{vel}
\end{align}
and, therefore, for $x\in
E=\bigcup\limits_{n=1}^{\infty}\left[\varkappa_n,
\varkappa_n+\frac{1}{\lambda_n-\lambda_{n-1}}\right],$  choosing
$n=\nu(x,F)$, we obtain
\begin{align*}&
F(x)\geq a_{n-1}e^{x\lambda_{n-1}}+a_n e^{x\lambda_n}=
\mu(x,F)\left(1+\frac{a_{n-1}e^{x\lambda_{n-1}}}{a_n
e^{x\lambda_n}}\right
)\geq (1+\beta)\mu(x,F),
\end{align*}
hence inequalities \eqref{ner} are true.

Now we prove that $\text{\rm h-meas }(E)=+\infty$.
By the construction $(\varkappa_{n})$ for all $n\geq 1$ we have
\begin{equation}\label{14}
\varkappa_{n}\geq b\varphi_1(b\lambda_{n-1}).\end{equation}

Taking into account the Lagrange theorem, condition $h\in L^+$ and
inequality (\ref{14}) we obtain
\[\text{\rm h-meas }(E)=\sum\limits_{n=1}^{+\infty}\int\limits_{\varkappa_n}^{\varkappa_n+\frac{1}{\lambda_n-\lambda_{n-1}}}dh(x)=
\sum\limits_{n=1}^{+\infty}\left(h(\varkappa_n+\frac{1}{\lambda_n-\lambda_{n-1}})-h(\varkappa_n)\right)\geq\]
\[\geq\sum\limits_{n=1}^{+\infty}\dfrac{h'(\varkappa_n)}{\lambda_n-\lambda_{n-1}}
\geq\sum\limits_{n=1}^{+\infty}\dfrac{h'(b\varphi_1(b\lambda_{n-1}))}{\lambda_n-\lambda_{n-1}}=+\infty.\]
Theorem \ref{the4} is proved.
\end{proof}

The following criterion immediately follows from Theorems \ref{the3}
and \ref{the4}.
\begin{theorem}\label{the5}{\sl Let $\Phi_1\in L$, $h\in L^+$ and $\varphi_1$ be the inverse function to the function $\Phi_1$.
For every entire function $F\in D_1(\Lambda, \Phi_1)$  relation
(\ref{3}) holds as $x\to +\infty$\ outside some set $E$ of finite
$h$ - measure uniformly in $y\in\mathbb{R}$ if and only if
(\ref{12}) be true.}\end{theorem}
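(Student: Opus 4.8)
The plan is to read Theorem~\ref{the5} as the conjunction of the sufficiency supplied by Theorem~\ref{the3} and the necessity supplied by Theorem~\ref{the4}, so that no new construction is required; the entire argument reduces to matching the quantifier in \eqref{12} against the quantifier in \eqref{13} and to a single elementary measure-theoretic observation. This matches the paper's own remark that the criterion follows immediately from the two preceding theorems.

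For the sufficiency (``if'') direction I would simply invoke Theorem~\ref{the3}: if \eqref{12} holds, then for every $F\in D_1(\Lambda,\Phi_1)$ relation \eqref{3} is valid as $x\to+\infty$ outside a set of finite $h$-measure, uniformly in $y\in\mathbb{R}$. Nothing beyond the citation is needed here.

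For the necessity (``only if'') direction I would argue by contraposition on the biconditional. The negation of \eqref{12} --- the failure of ``$(\forall b>0)$: the series converges'' --- is exactly \eqref{13}, since the non-negative series in question either converges or diverges to $+\infty$. Assuming \eqref{13}, Theorem~\ref{the4} furnishes a function $F\in D_1(\Lambda,\Phi_1)$, a constant $\beta>0$, and a set $E$ with $\text{\rm h-meas}(E)=+\infty$ on which \eqref{ner} holds. I would then combine this with the equivalence noted in the introduction, namely that \eqref{3} holding off a set $E_2$ is the same as \eqref{osn}, i.e.\ $M(x,F)\sim\mu(x,F)$ and $M(x,F)\sim m(x,F)$ for $x\to+\infty$, $x\notin E_2$. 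If \eqref{3} held for this $F$ outside some set $E_2$ of finite $h$-measure, then on $E\setminus E_2$ --- a set of infinite $h$-measure by subadditivity, hence unbounded --- one could pick $x_j\to+\infty$ along which simultaneously $M(x_j,F)/\mu(x_j,F)\to1$ and, by the first inequality of \eqref{ner}, $M(x_j,F)/\mu(x_j,F)>1+\beta$, which is impossible. Thus \eqref{3} cannot hold off any finite-$h$-measure set for this $F$, establishing the contrapositive and hence that \eqref{12} is necessary.

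The step requiring the most care, though it is still short, is the measure-theoretic one: I must confirm that $E\setminus E_2$ genuinely carries infinite $h$-measure, which follows from $\text{\rm h-meas}(E)=+\infty$ together with $\text{\rm h-meas}(E_2)<+\infty$, and that infinite $h$-measure forces unboundedness --- true because $h$ is continuous, so any bounded subset of $[0,+\infty)$ has finite $h$-measure. I should also note that the contradiction uses only the $\mu$-part of \eqref{osn} against the first inequality of \eqref{ner}, so the $m$-part plays no essential role. Everything else is a direct appeal to Theorems~\ref{the3} and~\ref{the4} and to the stated equivalence, with no further computation.
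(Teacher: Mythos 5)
Your proposal is correct and is exactly the paper's approach: the paper proves Theorem~\ref{the5} by the single remark that it ``immediately follows from Theorems~\ref{the3} and~\ref{the4}'', i.e.\ sufficiency from Theorem~\ref{the3} and necessity from Theorem~\ref{the4}. Your write-up merely makes explicit the routine details the paper leaves implicit --- that the negation of \eqref{12} is \eqref{13}, that $\text{\rm h-meas}(E\setminus E_2)=+\infty$ forces unbounded points where \eqref{ner} contradicts $M(x,F)\sim\mu(x,F)$ --- all of which are sound.
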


It is worth noting that if condition (\ref{11}) of Theorem \ref{the2} is not fulfilled, that is
\[(\exists b_1>0): \ \sum_{n=0}^{+\infty}\frac{1}{\lambda_{n+1}-\lambda_{n}}h^{\prime}\left(\varphi_0(b_1\lambda_n)+\dfrac{b_1}{\lambda_{n+1}-\lambda_{n}}\right)=+\infty,
\]
then for $b=\max\{b_1;2\}$ we have
\[\sum_{n=0}^{+\infty}\frac{h^{\prime}(b\varphi_0(b\lambda_n))}{\lambda_{n+1}-\lambda_{n}}=+\infty.
\]
Therefore,  condition (\ref{13}) holds and according to Theorem \ref{the4} there exists a function $F\in D_1(\Lambda, \Phi_0)$, a set  $E\subset[0,+\infty)$ and a constant 
$\beta>0$ such that inequalities (\ref{ner}) hold for all $x\in E$ and $\text{\rm h-meas }(E)=+\infty$.

Since for $\Phi_0(x)=x^{\alpha}\ (\alpha>0)$ we have
$D_0(\Lambda,\Phi_0)=D_1(\Lambda,\Phi_0)$, then from Theorem
\ref{the2} and \ref{the4} we obtain the following theorem.
\begin{theorem}\label{the6}{\sl Let $\Phi_0(x)=x^{\alpha}\ (\alpha>0)$, $h\in L^+$. For every entire function  $F\in D_0(\Lambda, \Phi_0)$ relation (\ref{3}) holds as
$x\to +\infty$\ outside some set $E$ of finite  $h$ - measure
uniformly in $y\in\mathbb{R}$ if and only if
\begin{equation*}
   (\forall b>0): \ \sum_{n=0}^{+\infty}\frac{1}{\lambda_{n+1}-\lambda_{n}}h^{\prime}\left(b(\lambda_n)^{1/\alpha}+\dfrac{b}{\lambda_{n+1}-\lambda_{n}}\right)<+\infty,
\end{equation*} is true.}\end{theorem}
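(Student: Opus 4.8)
The plan is to obtain Theorem~\ref{the6} as a direct specialization of Theorems~\ref{the2} and~\ref{the4} to the power weight $\Phi_0(x)=x^{\alpha}$, the decisive point being that for such a weight the two classes $D_0$ and $D_1$ coincide. First I would record two elementary facts. The inverse of $\Phi_0$ is $\varphi_0(y)=\varphi_1(y)=y^{1/\alpha}$, so that $\varphi_0(b\lambda_n)=b^{1/\alpha}\lambda_n^{1/\alpha}$. Next, for a power weight the defining inequality of $D_0(\Lambda,\Phi_0)$ reads $\ln\mu(x,F)\geq Kx^{1+\alpha}$ while that of $D_1(\Lambda,\Phi_0)$ reads $\ln\mu(x,F)\geq K_1K_2^{\alpha}x^{1+\alpha}$; since each amounts to $\ln\mu(x,F)\geq Cx^{1+\alpha}$ for some $C>0$, we have $D_0(\Lambda,\Phi_0)=D_1(\Lambda,\Phi_0)$.

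For the sufficiency direction I would first check that the condition displayed in Theorem~\ref{the6} is equivalent to condition \eqref{11} for this $\Phi_0$. Condition \eqref{11} carries the argument $\varphi_0(b\lambda_n)+b/(\lambda_{n+1}-\lambda_n)=b^{1/\alpha}\lambda_n^{1/\alpha}+b/(\lambda_{n+1}-\lambda_n)$, whereas Theorem~\ref{the6} carries $b\lambda_n^{1/\alpha}+b/(\lambda_{n+1}-\lambda_n)$. As both statements are quantified over all $b>0$ and $h'$ is non-decreasing, bounding one argument by the other after replacing $b$ by $\max\{b,b^{1/\alpha}\}$ (respectively $\max\{b,b^{\alpha}\}$) passes between the two families. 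Thus the hypothesis of Theorem~\ref{the6} implies \eqref{11}, and Theorem~\ref{the2} then yields relation \eqref{3} outside a set of finite $h$-measure for every $F\in D_0(\Lambda,\Phi_0)$.

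For the necessity I would argue by contraposition, which is precisely the computation already carried out in the paragraph preceding the theorem. If the condition of Theorem~\ref{the6} fails, then so does \eqref{11}: there is $b_1>0$ making the corresponding series divergent. Setting $b=\max\{b_1,2\}$ and using that \eqref{2} forces $\lambda_{n+1}-\lambda_n\to+\infty$ and $\lambda_n^{1/\alpha}\to+\infty$, one verifies that for all large $n$
\[
b^{1+1/\alpha}\lambda_n^{1/\alpha}=b\varphi_1(b\lambda_n)\geq(b_1\lambda_n)^{1/\alpha}+\frac{b_1}{\lambda_{n+1}-\lambda_n}.
\]
By monotonicity of $h'$ the divergent series then dominates $\sum_n h'(b\varphi_1(b\lambda_n))/(\lambda_{n+1}-\lambda_n)$ term by term, so condition \eqref{13} holds for $\Phi_1=\Phi_0$. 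Theorem~\ref{the4} now furnishes a function $F\in D_1(\Lambda,\Phi_0)=D_0(\Lambda,\Phi_0)$, a constant $\beta>0$ and a set $E$ with $\text{\rm h-meas}(E)=+\infty$ on which \eqref{ner} holds; on such a set \eqref{3} (equivalently \eqref{osn}) fails, and since removing any set of finite $h$-measure leaves a subset of $E$ of infinite $h$-measure, relation \eqref{3} cannot hold for every $F\in D_0$ outside a set of finite $h$-measure.

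I expect no genuine obstacle, as the statement is essentially a corollary of the two earlier theorems once $D_0=D_1$ is observed. The only step needing care is the term-by-term comparison in the necessity argument, where the additive term $b_1/(\lambda_{n+1}-\lambda_n)$ must be absorbed into $b^{1+1/\alpha}\lambda_n^{1/\alpha}$; this is exactly where the standing assumption \eqref{2} (equivalently $\lambda_{n+1}-\lambda_n\to+\infty$) and the choice $b\geq 2$ enter.
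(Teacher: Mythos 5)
Your proposal is correct and follows essentially the same route as the paper: observing $D_0(\Lambda,\Phi_0)=D_1(\Lambda,\Phi_0)$ for the power weight, deducing sufficiency from Theorem~\ref{the2} via condition \eqref{11}, and deducing necessity from Theorem~\ref{the4} through exactly the paper's choice $b=\max\{b_1,2\}$ showing that the failure of \eqref{11} forces \eqref{13}; you merely spell out details (the $b\mapsto\max\{b,b^{1/\alpha}\}$ rescaling and the final infinite-measure argument) that the paper leaves implicit. One wording slip: in the necessity step it is the series $\sum_n h'(b\varphi_1(b\lambda_n))/(\lambda_{n+1}-\lambda_n)$ that dominates the divergent series term by term (not the other way around), which is what your displayed inequality actually gives, so the conclusion stands.
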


\section{$h-$measure with non-increasing density}
Note that for every differentiable function $h\colon\mathbb{R}_+\to\mathbb{R}_+$ with the bounded derivative $h'(x)\leq c<+\infty$\ $(x>0)$
$$
\int_{E}dh(x)=\int_{E}h'(x)dx\leq c\int_{E}dx,
$$
thus, the finiteness of Lebesgue  measure of the set
$E\subset\mathbb{R}_+$ implies \break $\text{\rm h-meas
}(E)<+\infty$. Therefore, according to Theorem A, condition
\eqref{2} is sufficient to have the exceptional set $E$ of finite
$h-$measure. However, we express an assumption that for $h\in L^{-}$
in the subclass
$$
D_{\varphi}(\Lambda)=\big\{F\in D(\Lambda)\colon\ (\exists n_0)(\forall n\geq n_0)[ |a_n|\leq \exp\{-\lambda_n\varphi(\lambda_n)\}]\big\},\quad \varphi\in L,
$$
condition \eqref{2} can be weakened significantly. The following conjecture seems to be true.

\begin{Conjecture}\label{con1} {\sl Let $\varphi\in L$, $h\in L^{-}$. If
\begin{equation*}
   \sum_{n=0}^{+\infty}\frac{h^{\prime}(\varphi(\lambda_n))}{\lambda_{n+1}-\lambda_{n}}<+\infty,
\end{equation*}
then for all $F\in D_{\varphi}(\Lambda)$ relation (\ref{3}) is true as
 $x\to +\infty$\ outside some set
$E$ of finite  $h$-measure uniformly  in  $y\in\mathbb{R}$.}\end{Conjecture}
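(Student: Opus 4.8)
The plan is to follow the architecture of the proof of Theorem~\ref{the1}, but with two roles interchanged. In Theorem~\ref{the1} the growth hypothesis $\ln\mu(x,F)\ge x\Phi(x)$ supplies an \emph{upper} bound $x\le\varphi(\lambda_{\nu(x-0,F)})$ for the location of the central index, and this is matched to $h\in L^{+}$ (so that $h'$ is non-decreasing and an upper bound on the argument bounds $h'$). Here that growth hypothesis is replaced by the coefficient decay defining $D_{\varphi}(\Lambda)$, which instead supplies a \emph{lower} bound for the jump points, matched to $h\in L^{-}$ (so that $h'$ is non-increasing and a lower bound on the argument bounds $h'$).

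First I would record the jump-point lower bound. Let $(R_n)$ be the jump points of $\nu(\cdot,F)$, so that $\nu(x,F)=n$ on $[R_n,R_{n+1})$ and $R_{n+1}=(\ln|a_n|-\ln|a_{n+1}|)/(\lambda_{n+1}-\lambda_n)$. Since $\mu(x,F)\ge|a_0|$ for all $x\ge0$ and $\mu(R_{n+1},F)=|a_{n+1}|e^{R_{n+1}\lambda_{n+1}}$, taking logarithms and inserting $|a_{n+1}|\le\exp\{-\lambda_{n+1}\varphi(\lambda_{n+1})\}$ gives
\[
R_{n+1}\ \ge\ \varphi(\lambda_{n+1})+\frac{\ln|a_0|}{\lambda_{n+1}}\ =\ \varphi(\lambda_{n+1})+o(1)\qquad(n\to+\infty).
\]
This is the exact counterpart of the inequality $R_{k+1}+\tau_k\le\varphi(\lambda_k)$ of Theorem~\ref{the1}: it says that the $n$-th interval of constancy of the central index of $F$ lies essentially to the right of $\varphi(\lambda_n)$.

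Second, I would set up the exceptional set and its $h$-measure as in Theorem~\ref{the1}. The off-central terms of $F$ should decay away from $\nu(x,F)$ provided $x$ stays at distance $\gtrsim q/(\lambda_{n+1}-\lambda_n)$ from each jump point $R_{n+1}$, so the bad set $E_1(q)$ is a union of strips of width $\asymp q/(\lambda_{n+1}-\lambda_n)$ attached to the points $R_{n+1}$. By the Lagrange theorem together with the monotonicity of $h'$ — now using $h\in L^{-}$, so that $h'$ is non-increasing and, by the displayed lower bound, its value on each strip (evaluated at a point $\ge\varphi(\lambda_n)$) is $\le h'(\varphi(\lambda_n))$ — one obtains
\[
\text{h-meas}(E_1(q))\ \le\ Cq\sum_{n=0}^{+\infty}\frac{h'(\varphi(\lambda_n))}{\lambda_{n+1}-\lambda_n}\ <\ +\infty,
\]
which is finite precisely by the hypothesis of the Conjecture. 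The passage from finite $h$-measure to the $o(1)$ relation \eqref{3} is then the diagonal argument of Theorem~\ref{the1}: take $q_k=k$, choose $x_k\uparrow+\infty$ with $\text{h-meas}(E_1(q_k)\cap[x_k,+\infty))\le k^{-2}$, and glue the pieces into a single set $E$ of finite $h$-measure.

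The hard part is the off-central decay itself, i.e.\ the analogue of Lemma~1, and this is where I expect the real obstacle to lie. In Theorem~\ref{the1} the clean geometric bound \eqref{5} is produced by passing to $f_q=\sum(a_n/\alpha_n)e^{z\lambda_n}$ with $\alpha_n=e^{q\Delta_n}$, and the convergence of $\Delta_n$ rests on \eqref{2}. Here \eqref{2} need not hold: since $h\in L^{-}$ permits $h'(+\infty)=0$, the hypothesis of the Conjecture can be satisfied while $\sum 1/(\lambda_{n+1}-\lambda_n)=+\infty$. Working directly with the jump points of $F$ only yields
\[
\frac{|a_m|e^{x\lambda_m}}{\mu(x,F)}\ \le\ \exp\Big\{\sum_{j=\nu}^{m-1}(x-R_{j+1})(\lambda_{j+1}-\lambda_j)\Big\},\qquad \nu=\nu(x,F),
\]
which degrades to a non-summable tail when many short gaps cluster below $\Phi(x)$, and the bound $R_n\ge\varphi(\lambda_n)$ does not separate such clustered $\lambda_n$, since their $\varphi$-values cluster as well. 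I therefore expect the crux to be the construction of a regularizing auxiliary function \emph{adapted to} $D_{\varphi}(\Lambda)$: one should exploit the slack in $|a_n|\le\exp\{-\lambda_n\varphi(\lambda_n)\}$ to build a $\varphi$-weighted substitute $\widetilde\Delta_n$ for $\Delta_n$ that converges without \eqref{2} while still forcing the shifted terms of $F$ to drop geometrically. Showing that such a substitute simultaneously (i) keeps $f_q$ in $D(\Lambda)$, (ii) reproduces an estimate of the form \eqref{5}, and (iii) places the resulting strips at positions $\ge\varphi(\lambda_n)$ so that the measure bound above applies, is the single step on which the whole argument hinges.
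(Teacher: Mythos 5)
You should first be aware that the paper does not prove this statement at all: it is stated as Conjecture~\ref{con1} (``The following conjecture seems to be true''), i.e.\ it is left open. So there is no proof in the paper to compare yours against, and your attempt has to stand on its own. Judged that way, it is not a proof. The outer shell you transplant from Theorem~\ref{the1} is sound: the lower bound $R_{n+1}\ge\varphi(\lambda_{n+1})+\ln|a_0|/\lambda_{n+1}$, obtained from $|a_{n+1}|\le\exp\{-\lambda_{n+1}\varphi(\lambda_{n+1})\}$ together with $\mu(x,F)\ge|a_0|$, is correct, and pairing a \emph{lower} bound on the strip positions with a \emph{non-increasing} $h'$ is indeed the right dual of the paper's pairing of an upper bound $R_{k+1}+\tau_k\le\varphi(\lambda_k)$ with a non-decreasing $h'$.

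The genuine gap is the one you yourself flag: the analogue of Lemma~1, i.e.\ a bound of the type \eqref{5} forcing $|a_n|e^{x\lambda_n}\le e^{-q|n-\nu(x,F)|}\mu(x,F)$ off the exceptional strips. In Theorem~\ref{the1} this is manufactured by the auxiliary function $f_q$ with weights $\alpha_n=e^{q\Delta_n}$, and the quantities $\Delta_n$ are finite precisely because of \eqref{2}; under the Conjecture's hypothesis with $h\in L^{-}$ (where $h'$ may tend to $0$), condition \eqref{2} can fail, so that construction is unavailable. Your proposed remedy --- a ``$\varphi$-weighted substitute $\widetilde\Delta_n$'' satisfying your conditions (i)--(iii) --- is a wish, not a construction: nothing in the proposal produces it, and you give evidence (clustered exponents whose $\varphi$-values also cluster) that the naive versions fail. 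Without that lemma there is no inequality \eqref{osnovna}, no provably correct definition of the strips, and hence the $h$-measure computation in your second step estimates the measure of a set that has not been shown to be exceptional; the diagonal argument then has nothing to glue. In short, you have correctly located the obstruction that presumably makes this a conjecture rather than a theorem, but you have not removed it, so the statement remains exactly as open after your attempt as it is in the paper.
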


\section{$h-$measure and lacunary power series}
The important corollaries for entire functions represented by
lacunary power series of the form \eqref{gap} ensue from the
well-proven theorems.

For entire function $f$  of the form \eqref{gap} we put
$F(z)=f(e^z)$, $z\in\mathbb{C}$.

Note that for $x=\ln r$, $y=\varphi$
\[F(x+iy)=F(\ln r+i\varphi)=f(re^{i\varphi})\]
 and $M(x,F)=M_f(r)$, $m(x,F)=m_f(r)$, $\mu(x,F)=\mu_f(r),$ $\nu(x, F)=\nu_f(r)$. In addition, for
 $ E_2\overset{def}=\{r\in\mathbb{R}: \ln r\in E_1\}$ and $h_1$ such that $h'_1(x)=h'(e^x)$ it is  true
 $$
h\text{\rm
-log-meas}(E_2)\overset{def}=\int_{E_2}\frac{dh(r)}{r}=\int_{E_1}\frac{dh(e^x)}{e^x}=\int_{E_1}dh_1(x)=h_1\text{\rm
-meas}(E_1).
$$

Hence, the next corollary follows from  Theorem B.
\begin{Corollary}\label{cor1} For every sequence $(n_k)$ such that condition (\ref{2}) holds and  for every function
 $h\in L^{+}$ there exist an  entire function  $f$ of the form (\ref{gap}),  a constant  $\beta>0$ and a set $E_2$ of infinite
 $h$-log-measure,
 $i.e. \big(\int_{E_2}\frac{dh(r)}{r}=+\infty\big)$ such that
\begin{equation}\label{nerlac}
(\forall r\in E_2): \  M_f(r)\geq(1+\beta)\mu_f(r),\ \ \ M_f(r)\geq(1+\beta)m_f(r).
\end{equation}
\end{Corollary}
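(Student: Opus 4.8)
The plan is to deduce the corollary from Theorem B through the substitution $F(z)=f(e^z)$ recorded just before the statement. First I would put $\lambda_k=n_k$, so that $\Lambda=(\lambda_k)$ is the frequency sequence of an entire Dirichlet series and condition \eqref{2} for $\Lambda$ is literally condition \eqref{2} for $(n_k)$. I would then introduce the companion weight $h_1$ determined by $h_1'(x)=h'(e^x)$, the very function appearing in the change-of-variables identity displayed above: since $h\in L^{+}$ its derivative $h'$ is positive and non-decreasing, and composing with the increasing map $x\mapsto e^x$ keeps $h_1'$ positive and non-decreasing, so $h_1$ lies in the class of weights admissible for Theorem B.

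The core step is to apply Theorem B to the pair $(\Lambda,h_1)$. This yields an entire Dirichlet series $F(z)=\sum_{k\geq0}a_k e^{z\lambda_k}\in D(\Lambda)$, a constant $\beta>0$, and a measurable set $E_1\subset[0,+\infty)$ with $\int_{E_1}dh_1(x)=+\infty$ on which both inequalities in \eqref{ner} hold. I would transport this back to the power-series side by setting $f_k=a_k$ and $f(z)=\sum_{k\geq0}f_k z^{n_k}$; absolute convergence of $F$ on every vertical line says $\sum_k|a_k|R^{n_k}<+\infty$ for all $R>0$, so $f$ is entire and $F(z)=f(e^z)$. Through the dictionary $M(x,F)=M_f(r)$, $m(x,F)=m_f(r)$, $\mu(x,F)=\mu_f(r)$ with $r=e^x$, the inequalities $M(x,F)>(1+\beta)\mu(x,F)$ and $M(x,F)>(1+\beta)m(x,F)$ turn into \eqref{nerlac} at $r=e^x$ (the strict inequalities of Theorem B give the non-strict ones claimed).

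Finally I would control the exceptional set: with $E_2=\{r>0:\ \ln r\in E_1\}$ the change-of-variables identity gives $\int_{E_2}\frac{dh(r)}{r}=\int_{E_1}dh_1(x)=+\infty$, so $E_2$ has infinite $h$-log-measure, while \eqref{nerlac} holds at every $r\in E_2$ by the preceding translation. The step I expect to demand the most care is checking that $h_1$ really satisfies the hypotheses of Theorem B, i.e.\ that admissibility of the weight is preserved under the passage $h\mapsto h_1$ (in particular the requisite growth and monotonicity of $h_1'$, which trace back to properties of $h'$): this is the single place where the structure of the class $L^{+}$ and the exponential change of variable genuinely interact, whereas the remainder is a mechanical transfer through the correspondence $F(z)=f(e^z)$.
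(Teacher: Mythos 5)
Your route is the paper's own: Corollary \ref{cor1} is deduced from Theorem B via the substitution $F(z)=f(e^z)$, the dictionary $M(x,F)=M_f(r)$, $m(x,F)=m_f(r)$, $\mu(x,F)=\mu_f(r)$ ($r=e^x$), and the identity $\int_{E_2}\frac{dh(r)}{r}=\int_{E_1}dh_1(x)$ with $h_1'(x)=h'(e^x)$; your transport of the coefficients, of the inequalities \eqref{ner} into \eqref{nerlac}, and of the measure is exactly this mechanical step, and it is fine. The gap sits precisely at the step you yourself singled out as delicate, and your resolution of it is incorrect. The class of weights admissible in Theorem B is not ``positive with non-decreasing derivative'': Theorem B requires $h'(x)\nearrow+\infty$ as $x\to+\infty$ (and Mykytyuk's relaxation, quoted right after it, still requires $h(x)/x\to+\infty$). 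Membership $h\in L^{+}$ gives only that $h'$ is non-decreasing, and $h_1'(x)=h'(e^x)$ tends to $+\infty$ if and only if $h'(r)\to+\infty$ as $r\to+\infty$, which is not implied by $h\in L^{+}$: for $h(r)=r+1$ one gets $h_1'\equiv 1$, and Theorem B simply does not apply to $h_1$.

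This is not a removable technicality, because for such $h$ the asserted conclusion is itself false. For $h(r)=r+1$ the $h$-log-measure is the logarithmic measure; since for a non-trivial $f$ of the form \eqref{gap} one has $M_f(r)=(1+o(1))\mu_f(r)=(1+o(1))m_f(r)$ as $r\to 0^{+}$, a set $E_2$ on which \eqref{nerlac} holds with a fixed $\beta>0$ would have to acquire its infinite logarithmic measure near $+\infty$, and that contradicts Fenton's Theorem~\ref{tFen} (equivalently, Theorem A transferred by the same substitution), which under \eqref{2} confines such $r$ to a set of finite logarithmic measure. So the corollary can only be proved --- and is only true --- under the additional hypothesis $h'(x)\to+\infty$ (equivalently $h(x)/x\to+\infty$ for $h\in L^{+}$), which is exactly the hypothesis of Theorem B; with it, your transfer goes through verbatim. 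In fairness, this imprecision originates in the paper itself: its one-line proof ``follows from Theorem B'' silently assumes that the transformed weight $h_1$ is admissible, which is the very point your argument fails to, and for general $h\in L^{+}$ cannot, establish.
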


In turn, from Theorem \ref{the1} we obtain the following
consequence.
\begin{Corollary}\label{cor2} Let $\Phi\in L,$ $h\in L^+$ and $\varphi$ be the  inverse function to the function $\Phi$.  If for an entire function  $f$ of the form (\ref{gap})
\begin{equation}\label{clac1}
\ln\mu_f(r)\geq\ln r\Phi(\ln r)  \ \ \ \ (r\geq r_0)
\end{equation}
and
\begin{equation}\label{88}
   (\forall b>0):\quad \sum\limits_{k=0}^{+\infty}\frac1{n_{k+1}-n_{k}}
{h^{\prime}\Big(\exp\Big\{\varphi(n_k)+\frac
b{n_{k+1}-n_{k}}}\Big\}\Big)<+\infty,
\end{equation}
 then relation
\begin{equation}\label{lac1}
f(re^{i\varphi})=(1+o(1))a_{\nu_f(r)}r^{n_{\nu_f(r)}}e^{i\varphi
n_{\nu_f(r)}}
\end{equation}
holds as  $r\to+\infty$ outside some set $E_2$ of finite
$h$-log-measure uniformly in $\varphi\in[0,2\pi]$.
\end{Corollary}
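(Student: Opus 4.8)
The plan is to deduce the corollary from Theorem~\ref{the1} through the substitution $F(z)=f(e^z)$ together with the change of variable $x=\ln r$, exactly as set up at the beginning of Section~4. First I would observe that $F(z)=f(e^z)=\sum_{k=0}^{+\infty}f_k e^{z n_k}$ is an entire Dirichlet series in $D(\Lambda)$ with $\lambda_k=n_k$ and $a_k=f_k$, and that under $x=\ln r$, $y=\varphi$ one has $\mu(x,F)=\mu_f(r)$, $\nu(x,F)=\nu_f(r)$ and $F(x+iy)=f(re^{i\varphi})$. Consequently hypothesis \eqref{clac1} becomes $\ln\mu(x,F)\geq x\,\Phi(x)$ for $x\geq\ln r_0$, which is precisely the defining inequality of the class $D(\Lambda,\Phi)$; hence $F\in D(\Lambda,\Phi)$.

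Next I would introduce the transformed density $h_1$ determined by $h_1'(x)=h'(e^x)$, as in Section~4. The key technical check is that $h_1\in L^{+}$: since $h\in L^{+}$, its derivative $h'$ is positive and non-decreasing, so $h_1'(x)=h'(e^x)$ is positive and non-decreasing (a composition of the non-decreasing $h'$ with the increasing $e^x$), which makes $h_1$ increasing with non-decreasing derivative; moreover $h_1(x)\geq h_1(0)+x\,h'(1)\to+\infty$, so $h_1\in L^{+}$. With this $h_1$ in hand, I would rewrite the abstract condition \eqref{8} of Theorem~\ref{the1} for the data $(\lambda_k)=(n_k)$ and density $h_1$: since $h_1'(u)=h'(e^u)$,
$$h_1'\Big(\varphi(n_k)+\frac{b}{n_{k+1}-n_k}\Big)=h'\Big(\exp\Big\{\varphi(n_k)+\frac{b}{n_{k+1}-n_k}\Big\}\Big),$$
so that condition \eqref{8} for the pair $(F,h_1)$ coincides term-by-term with the assumed condition \eqref{88}. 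Thus \eqref{88} guarantees that all hypotheses of Theorem~\ref{the1} are satisfied.

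Applying Theorem~\ref{the1} then yields relation \eqref{3} for $F$ as $x\to+\infty$ outside a set $E_1$ of finite $h_1$-measure, uniformly in $y\in\mathbb{R}$. Translating back through $x=\ln r$, $y=\varphi$, and using $a_{\nu(x,F)}e^{(x+iy)\lambda_{\nu(x,F)}}=a_{\nu_f(r)}r^{n_{\nu_f(r)}}e^{i\varphi n_{\nu_f(r)}}$, gives relation \eqref{lac1} as $r\to+\infty$; by the $2\pi$-periodicity in the angular variable the uniformity in $y\in\mathbb{R}$ reduces to uniformity in $\varphi\in[0,2\pi]$. Finally I would invoke the identity from Section~4,
$$\int_{E_2}\frac{dh(r)}{r}=\int_{E_1}dh_1(x),$$
for $E_2=\{r:\ln r\in E_1\}$, to conclude that $E_2$ has finite $h$-log-measure. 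The only genuinely non-routine step is the verification that $h_1\in L^{+}$; everything else is a direct transcription of Theorem~\ref{the1} through the exponential dictionary established at the start of Section~4.
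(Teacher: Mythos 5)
Your proof is correct and follows essentially the same route as the paper: the paper's own argument is a one-line reduction to Theorem~\ref{the1} via the substitution $F(z)=f(e^z)$ with $\Lambda=(n_k)$, the transformed density $h_1$ satisfying $h_1'(x)=h'(e^x)$, and the measure identity $\int_{E_2}\frac{dh(r)}{r}=\int_{E_1}dh_1(x)$ established at the start of Section~4. Your extra verifications --- that $h_1\in L^{+}$ and that condition \eqref{8} for the pair $(F,h_1)$ coincides term-by-term with \eqref{88} --- simply make explicit what the paper leaves implicit.
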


In fact, from condition (\ref{clac1}) it follows that $F\in
D(\Lambda, \Phi)$ with $\Lambda=(n_k)$  and it remains to apply
Theorem \ref{the1} with the function $h_1$.

Denote by $\mathcal{E}$ the class of entire functions of positive
lower order, i.e.
$$
\lambda_f:=\varliminf\limits_{r\to +\infty}\ln\ln M_f(r)/\ln r>0.
$$

Immediately from Theorem \ref{the5} we obtain following assertion.
\begin{Corollary}\label{cor3} Let $h\in L^+$.
In order that relations \eqref{asymp} hold for every function
$f\in\mathcal{E}$ of the form \eqref{gap} as $r\to +\infty$ outside
a set of finite h-log-measure, necessary and sufficient
\begin{equation*}
   (\forall b>0):\ \ \sum\limits_{k=0}^{+\infty}\dfrac1{n_{k+1}-n_{k}}
h^{\prime}\Big((n_k)^b\Big)<+\infty.
\end{equation*}
\end{Corollary}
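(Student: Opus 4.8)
The plan is to derive the corollary from Theorem~\ref{the5} through the substitution $F(z)=f(e^z)$ already set up before Corollary~\ref{cor1}, taking the comparison function to be $\Phi_1(t)=e^t$. First I would record the translation dictionary: with $\Lambda=(n_k)$ and $x=\ln r$ one has $M(x,F)=M_f(r)$, $m(x,F)=m_f(r)$, $\mu(x,F)=\mu_f(r)$, $\nu(x,F)=\nu_f(r)$; relation \eqref{3} for $F$ is equivalent to \eqref{osn}, hence to \eqref{asymp} for $f$; and the $h$-log-measure of $E_2=\{r:\ln r\in E_1\}$ equals the $h_1$-measure of $E_1$, where $h_1'(x)=h'(e^x)$. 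Since $h\in L^{+}$ and $t\mapsto e^t$ is increasing, $h_1'$ is non-decreasing and $h_1$ increases to $+\infty$ (because $h_1'(x)\geq h'(1)>0$), so $h_1\in L^{+}$; moreover $\Phi_1(t)=e^t\in L$ with inverse $\varphi_1(s)=\ln s$. Thus the statement will follow once I match the class $\mathcal{E}$ with $D_1((n_k),e^t)$ and rewrite condition \eqref{12} for $h_1$ and $\varphi_1$.

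The key identification is that, for $f$ of the form \eqref{gap}, one has $f\in\mathcal{E}$ if and only if $F\in D_1((n_k),\Phi_1)$ with $\Phi_1(t)=e^t$. For this I would use the elementary estimate $\mu_f(r)\leq M_f(r)\leq 2\mu_f(2r)$, the right inequality coming from $M_f(r)\leq\sum_k|f_k|r^{n_k}\leq\mu_f(R)\,R/(R-r)$ with $R=2r$; it shows that the lower orders computed from $M_f$ and from $\mu_f$ coincide. Hence $\lambda_f>0$ is equivalent to $\varliminf_{r\to\infty}\ln\ln\mu_f(r)/\ln r>0$, i.e. to $\ln\mu_f(r)\geq r^{\sigma}$ for large $r$ and some $\sigma>0$; in the variable $x=\ln r$ this reads $\ln\mu(x,F)\geq e^{\sigma x}\geq K_1 x\,e^{K_2 x}=K_1 x\,\Phi_1(K_2 x)$ for suitable $K_1,K_2>0$, i.e. $F\in D_1((n_k),e^t)$. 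Conversely $F\in D_1((n_k),e^t)$ gives $\ln\mu_f(r)\geq K_1(\ln r)\,r^{K_2}$, whence positive lower order and $f\in\mathcal{E}$. This yields a bijection between $\{f\in\mathcal{E}\text{ of the form }\eqref{gap}\}$ and $\{F\in D_1((n_k),e^t)\}$.

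Finally I would specialize condition \eqref{12} of Theorem~\ref{the5} to $\Phi_1(t)=e^t$, $\varphi_1(s)=\ln s$ and $h_1$ in place of $h$. Using $h_1'(u)=h'(e^u)$ one gets $h_1'(b\varphi_1(b\lambda_k))=h'(\exp\{b\ln(b n_k)\})=h'(b^{b} n_k^{\,b})$, so \eqref{12} becomes $(\forall b>0):\ \sum_k (n_{k+1}-n_k)^{-1}h'(b^{b} n_k^{\,b})<+\infty$. Since $h'$ is non-decreasing and the quantifier ranges over all $b>0$, this is equivalent to the condition of the corollary $(\forall b>0):\ \sum_k(n_{k+1}-n_k)^{-1}h'(n_k^{\,b})<+\infty$: in one direction, given $b$ pick $b'\geq\max\{b,1\}$ so that $n_k^{\,b}\leq (b')^{b'}n_k^{\,b'}$ for large $k$; in the other, $b^{b} n_k^{\,b}\leq n_k^{\,b+1}$ for large $k$. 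Combining the bijection, the dictionary, and this equivalence of the two series, the sufficiency half of the corollary follows from Theorem~\ref{the3} (the sufficiency in Theorem~\ref{the5}) and the necessity half from Theorem~\ref{the4}.

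The main obstacle is the class identification $\mathcal{E}\leftrightarrow D_1((n_k),e^t)$: the delicate point is passing from a lower bound on $M_f$ to one on $\mu_f$, which is exactly where the inequality $M_f(r)\leq 2\mu_f(2r)$ (equivalently, the equality of the lower orders defined by $M_f$ and by the maximal term) is needed; without it the hypothesis $f\in\mathcal{E}$ would only bound $M_f$ from below and would not place $F$ in $D_1$. The remaining steps — the logarithmic change of variable and the constant-absorbing equivalence of the two series conditions — are routine given the monotonicity of $h'$ and the universal quantifier over $b$.
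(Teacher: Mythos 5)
Your proof is correct and takes essentially the same route as the paper, which obtains the corollary ``immediately'' from Theorem~\ref{the5} via the substitution $F(z)=f(e^z)$, $\Lambda=(n_k)$, $h_1'(x)=h'(e^x)$ set up before Corollary~\ref{cor1}. Your explicit choice $\Phi_1(t)=e^t$, the identification of $\mathcal{E}$ with $D_1((n_k),\Phi_1)$ through $\mu_f(r)\leq M_f(r)\leq 2\mu_f(2r)$, and the constant-absorbing equivalence of the two series conditions are precisely the details the paper leaves implicit.
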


\noindent{\bf Acknowledgements.} We are indebted to Dr. A.O.Kuryliak
for helpful comments and corrections to previous versions of this
note.

\end{document}